\numberwithin{equation}{section}
  \theoremstyle{plain}
  \newtheorem{thm}{Theorem}[section]
  \theoremstyle{plain}
  \newtheorem{lem}[thm]{Lemma}
  \theoremstyle{plain}
  \newtheorem{prop}[thm]{Proposition}
  \theoremstyle{plain}
\theoremstyle{remark}
  \newcommand{\eps}{\varepsilon}
  \newcommand{\R}{\mathbb R}
\begin{document}

\def\binom#1#2{{#1\choose #2}}

\title{Tomography of small residual stresses}
\thanks{This work was partially supported by a 2008-2010 NSC-RFBR Bilateral Research
Project. The first author was also partially supported by the RFBR Grant 11-01-12106-ofi-m-2011. The authors are grateful to NSC, Taiwan and RFBR,
Russia for the financial support.}
\author{Vladimir Sharafutdinov and Jenn-Nan Wang}
\address{Sobolev Institute of Mathematics, Russia}
\email{sharaf@math.nsc.ru}
\thanks{}
\address{Department of Mathematics, National Taiwan University, Taipei 106, Taiwan}
\email{jnwang@math.ntu.edu.tw}
\thanks{}


\maketitle

\begin{abstract}
We study the inverse problem of determining the
residual stress in Man's model using tomographic data.
Theoretically, the tomographic data are obtained at the zero
approximation of geometrical optics for Man's residual stress model.
For compressional waves, the inverse problem is equivalent to the
problem of inverting the longitudinal ray transform of a symmetric
tensor field. For shear waves, the inverse problem, after the
linearization, leads to another integral geometry operator which is
called the mixed ray transform. Under some restrictions on
coefficients, we are able to prove the uniqueness results in these
two cases.
\end{abstract}



\section[]{Introduction}

Residual stresses are the stresses existing within an elastic body
in the absence of external loads. They can be caused, for example,
by heat treatment. The (Cauchy) residual stress tensor $\tilde R$
is symmetric, ${\tilde R}_{ij}={\tilde R}_{ji}$, satisfies the equilibrium equation
\begin{equation}\label{1}
\text{div}\tilde R=0\quad \text{in}\quad M
\end{equation}
and the traction-free boundary condition
\begin{equation}\label{1'}
\tilde RN=0\quad \text{on}\quad
\partial M,
\end{equation}
where $M\subset\R^3$ is the domain occupied by an elastic body and
$N$ is the normal to the boundary $\partial M$. Uncontrolled
residual stresses are often detrimental to the life of a structural
component. Nevertheless, in some cases, residual stresses are
introduced on purpose to make materials or structures more resistant
to damage, for example, toughened glass. It is therefore desirable
to design reliable methods to determine the residual stress in a
body. Besides its practical value, the determination of residual
stresses gives rise to many challenging mathematical questions as
well. Based on different model equations for the residual stress and
using different types of measurements, several versions of the
inverse problem of unique determination of the residual stress were
studied in \cite{hu}, \cite{ho}, \cite{iwy1}, \cite{iwy2},
\cite{imn}, \cite{ra} (see also \cite{ro1}, \cite{ro2} for related
results).

In this paper, we study the problem of recovering the residual stress tensor from
the viewpoint of tensor tomography. To this end, we
consider the time-stationary elastic system with residual stress that is assumed to be
comparable with the
inverse of the frequency. To be more precise, we set $\tilde R=R/\omega$, where $\omega\in\R$ is the (angular) frequency of an elastic wave. In what follows,
we refer to $R$ as the residual stress tensor again although its physical dimension is time$\cdot$stress = time$\cdot$force/area.
The time-stationary elastic wave equation is
\begin{equation}\label{man0}
\text{div}\sigma+\omega^2\rho u=0,
\end{equation}
where  $u$ is the displacement
vector, $\rho>0$ is the density and $\sigma$ is the first
Piola-Kirchhoff stress tensor. In view of Man's model \cite{Man}, the first Piola-Kirchhoff stress tensor is expressed through the displacement vector by the equation
\begin{equation}  \label{Man1}
\begin{aligned}
\sigma&=\lambda(\mbox{tr}\,\varepsilon)I+2\mu\varepsilon+\frac{1}{\omega}\Big{[}R+{\nabla}u\cdot R\\
&+\nu_1(\mbox{tr}\,\varepsilon)(\mbox{tr}\,R)I
+\nu_2(\mbox{tr}\,R)\varepsilon
+\nu_3\Big((\mbox{tr}\,\varepsilon)R+(\mbox{tr}\,\varepsilon
R)I\Big) +\nu_4(\varepsilon R+R\varepsilon)\Big{]},
\end{aligned}
\end{equation}
where  $\eps$ is the infinitesimal strain tensor, $\lambda$ and $\mu$
are Lam\'e's parameters, and $\nu_1,\dots,\nu_4$ are Man's
parameters. Ignoring the term with $1/\omega$ in (\ref{Man1}), we
obtain the standard Hooke's law
\begin{equation}  \label{BG}
\sigma=\lambda(\mbox{tr}\,\varepsilon)I+2\mu\varepsilon.
\end{equation}
We refer to (\ref{BG}) as the {\it background isotropic medium} for
(\ref{Man1}), while (\ref{Man1}) is called the {\it quasi-isotropic
perturbation} of the medium (\ref{BG}) caused by the residual stress
tensor $R$. Here, the term ``quasi-isotropic'' is used to emphasize
that the perturbation smallness parameter $1/\omega$ coincides with
the reciprocal of the frequency. The concept of the quasi-isotropic
approximation or quasi-isotropic media was first introduced by
Kravtsov \cite{Kr} for the Maxwell equations (see also \cite{KO}).
This approximation is based on the method of geometrical optics with
rays generated by the background isotropic medium. Its
generalization to elastic waves is presented in \cite[Ch. 7]{mb}.

Similar to the method used in \cite{mb} for a general anisotropic
perturbation of an isotropic elastic medium, we start with applying
the classical ray method to \eqref{man0} with the stress tensor given by
\eqref{Man1}. We restrict ourselves to considering the zero
approximation of geometrical optics. As compared with the classical
case of isotropic media, our formulas for the zero approximation
have two distinct features. First, an additional factor appears in
the formula for the amplitude of a compressional wave to describe
the accumulation of the wave phase along a ray due to the residual
stress. Second, the Rytov law for shear waves has got an additional
term depending linearly on the residual stress. The inverse problem
we study in this work is to determine the residual stress from the
results of registration of the compressional or shear waves on the
boundary of the medium under investigation. For compressional waves,
the inverse problem is equivalent to the problem of inverting the
longitudinal ray transform of a symmetric tensor field. For shear
waves, the inverse problem, after the linearization, leads to
another integral geometry operator that is called the mixed ray
transform.

The paper is organized as follows. In Section~2, we discuss the
quasi-isotropic approximation for Man's model of residual stress.
For compressional waves, we derive the formula for the amplitude; while for shear waves, we obtain the
Rytov law. All the content of Section~2 is actually some modification of the corresponding arguments from \cite[Section 7.1]{mb}. We have chosen the following compromise presentation style in Section~2: all details are presented for compressional waves while our arguments for shear waves are condensed as far as possible.
In Section~3, we investigate the inverse problem of
determining the residual stress from measurements of compressional
waves on the boundary. The inverse problem of determining the
residual stress using shear waves is discussed in Section~4.

\section[]{Quasi-isotropic approximation for residual stresses}

In this section, we derive the quasi-isotropic approximation for the
residual stress model \eqref{Man1} in detail. We first write (\ref{Man1}) in the form
\begin{equation}  \label{Man2}
\sigma=\lambda(\mbox{tr}\,\varepsilon)I+2\mu\varepsilon+\frac{1}{\omega}(R+{\nabla}u\cdot
R+c\varepsilon),
\end{equation}
where $c=c(R)$ is a fourth rank tensor depending linearly on $R$. To write down the dependence explicitly, we reproduce (\ref{Man1}) and (\ref{Man2}) in Cartesian coordinates
\begin{equation}  \label{Man3}
\begin{aligned}
\sigma_{jk}&=\lambda\varepsilon_{pp}\delta_{jk}+2\mu\varepsilon_{jk}+\frac{1}{\omega}\Big{(}R_{jk}+{\nabla}_{\!j}u_p\cdot R_{pk}\\
&+\nu_1 R_{pp}\varepsilon_{qq}\delta_{jk} +\nu_2
R_{pp}\varepsilon_{jk}
+\nu_3(\varepsilon_{pp}R_{jk}+\varepsilon_{pq} R_{qp}\delta_{jk})
+\nu_4(\varepsilon_{jp} R_{pk}+R_{jp}\varepsilon_{pk})\Big{)}
\end{aligned}
\end{equation}
and
\begin{equation}  \label{Man4}
\sigma_{jk}=\lambda\varepsilon_{pp}\delta_{jk}+2\mu\varepsilon_{jk}+\frac{1}{\omega}(R_{jk}+{\nabla}_{\!j}u_p\cdot
R_{pk}+c_{jklm}\varepsilon_{lm}).
\end{equation}
Comparing these two formulas, we deduce
\begin{equation}  \label{1.3}
\begin{aligned}
c_{jklm}&=\nu_1 R_{pp}\delta_{jk}\delta_{lm}
+\frac{\nu_2}{2} R_{pp}(\delta_{jl}\delta_{km}+\delta_{jm}\delta_{kl})\\
+&\nu_3(R_{jk}\delta_{lm}+R_{lm}\delta_{jk})
+\frac{\nu_4}{2}(R_{jl}\delta_{km}+R_{jm}\delta_{kl}+R_{kl}\delta_{jm}+R_{km}\delta_{jl}).
\end{aligned}
\end{equation}

Since we are going to use curvilinear coordinates, we rewrite
(\ref{1.3}) and (\ref{Man4}) in the covariant form
\begin{equation}  \label{1.4}
\begin{aligned}
c_{jklm}&=\nu_1 (\mbox{tr}_ER) g_{jk}g_{lm}
+\frac{\nu_2}{2} (\mbox{tr}_ER)(g_{jl}g_{km}+g_{jm}g_{kl})\\
+&\nu_3(R_{jk}g_{lm}+R_{lm}g_{jk})
+\frac{\nu_4}{2}(R_{jl}g_{km}+R_{jm}g_{kl}+R_{kl}g_{jm}+R_{km}g_{jl}),
\end{aligned}
\end{equation}
\begin{equation}  \label{1.5}
\sigma_{jk}=\lambda\varepsilon^p_p
g_{jk}+2\mu\varepsilon_{jk}+\frac{1}{\omega}(R_{jk}+u^p_{\! \
;j}R_{pk}+c_{jklm}\varepsilon^{lm}).
\end{equation}
Here $(g_{jk})$ is the Euclidean metric tensor such that
$|dx|^2_E=g_{jk}dx^jdx^k$, $(\mbox{tr}_ER)=g^{jk}R_{jk}$,
$\varepsilon^{lm}=g^{jl}g^{km}\varepsilon_{jk}$, and
$\eps_m^m=g^{lm}\eps_{lm}$. Recall that $\eps_{lm}=(u_{l;\, m}+u_{m;\,
l})/2$, $u_l=g_{lk}u^k$,
$$u_{l;\> m}=\frac{\partial u_l}{\partial
x^m}-\Gamma_{lm}^ku_k\quad\mbox{and}\quad
u^l_{\> ;\> m}=\frac{\partial u^l}{\partial
x^m}+\Gamma_{km}^lu^k,$$ where $\Gamma_{km}^l$ are the Christoffel
symbol given by
$$\Gamma_{km}^l=\frac 12g^{lj}\Big(\frac{\partial g_{kj}}{\partial
x^m}+\frac{\partial g_{mj}}{\partial x^k}-\frac{\partial
g_{km}}{\partial x^j}\Big).$$
In curvilinear coordinates, \eqref{man0}
is expressed as
\begin{equation}\label{man5}
\sigma_{jk;}{}^k+\omega^2\rho u_j=0,
\end{equation}
where $\sigma_{jk;}{}^l=g^{lm}\sigma_{jk;\> m}$ and
$$\sigma_{jk;\> m}=\frac{\partial\sigma_{jk}}{\partial
x^m}-\Gamma_{jm}^l\sigma_{lk}-\Gamma_{km}^l\sigma_{lj}.$$ It is
clear that the tensor $c$ possesses the symmetries
\begin{equation}  \label{1.6}
c_{jklm}=c_{kjlm}=c_{jkml}=c_{lmjk}
\end{equation}
as follows from (\ref{1.4}). The equilibrium
equation \eqref{1} is now written as
\begin{equation}  \label{1.7}
{R_{jk\>;}}^k=0.
\end{equation}

We consider propagation of small elastic waves in a medium described
by \eqref{man5} with the constitutive law (\ref{1.5}). We reproduce
some arguments of book \cite{mb}. The method
of geometrical optics consists of representing a solution to the
system by the asymptotic series
$$
u_ j=e^{i\omega\tau}\sum\limits_{m=0}^\infty\frac{{{\stackrel m u}}_j}{(i\omega)^m},\quad
\varepsilon_{jk}=e^{i\omega\tau}\sum\limits_{m=-1}^\infty\frac{{\stackrel m \varepsilon}_{jk}}{(i\omega)^m},\quad
\sigma_{jk}=e^{i\omega\tau}\sum\limits_{m=-1}^\infty\frac{{\stackrel m \sigma}_{jk}}{(i\omega)^m },
$$
where $\tau=\tau(x)$ is a real function (eikonal). We insert the
series into the equations under consideration, implement
differentiations and equate the coefficients at the same powers of
the frequency $\omega $ on the left- and right-hand sides of the
so-obtained equalities. In such a way we arrive at the infinite
system of equations
\begin{equation}\label{7.1.8}
{\stackrel m \varepsilon}_{jk}=\frac 12\left({\stackrel m u}_{j;\>
k}+{\stackrel m u}_{k;\> j}+{\stackrel {m+1} u}_{\!\!\!j}\,\tau_{;\>
k}+{\stackrel {m+1} u}_{\!\!\!k}\,\tau_{;\> j}\right)\quad (m=-1,0,\cdots),
\end{equation}
\begin{equation}\label{7.1.10}
{\stackrel m \sigma}_{jk;}{}^k+{\stackrel{m+1}
\sigma}_{\!\!\!jk}\,\tau_{;}{}^k-\rho{\stackrel {m+2}
u_{\!\!\!j}}=0\quad(m=-2,-1,\cdots),
\end{equation}
\begin{equation}  \label{7.1.7}
{\stackrel {-1} \sigma}_ {\!jk}=\lambda {\stackrel {-1} \varepsilon}{}^p_p g_{jk} + 2\mu{\stackrel {-1} \varepsilon}_{\!jk},
\end{equation}
\begin{equation}  \label{7.1.71}
{\stackrel m \sigma}_ {\!jk}=\lambda {\stackrel m \varepsilon}{}^p_p
g_{jk} + 2\mu{\stackrel m \varepsilon}_{jk} +
i(\tau_{;\> j}R_{kp}{\stackrel m u}{}^p+R_{kp}{\stackrel {m-1}
u}{}^p_{;\> j}+c_{jkpq}{\stackrel {m-1} \varepsilon}{}^{pq}) \quad (m
= 0,2,3, ...),
\end{equation}
\begin{equation}  \label{7.1.72}
{\stackrel 1 \sigma}_ {\!jk}=\lambda {\stackrel 1 \varepsilon}{}^p_p
g_{jk} + 2\mu{\stackrel 1 \varepsilon}_{jk} +
i(R_{jk}+\tau_{;\> j}R_{kp}{\stackrel 1 u}{}^p+R_{kp}{\stackrel {0}
u}{}^p_{;\> j}+c_{jkpq}{\stackrel {0} \varepsilon}{}^{pq}) ,
\end{equation}
where it is assumed that
$\stackrel{-1}u=\stackrel{-2}\eps=\stackrel{-2}\sigma=0$. Observe
that $\stackrel {-1} \sigma$ is a symmetric tensor while $\stackrel
m \sigma \ (m\geq 0)$ is not symmetric. Putting $m=-1$ in
\eqref{7.1.8} and $m=-2$ in \eqref{7.1.10}, we have
\begin{equation}\label{7.1.11}
\stackrel{-1}\eps_{jk}=\frac 12\left({\stackrel 0 u}_j\tau_{;\>
k}+{\stackrel 0 u}_k\tau_{;\> j}\right)
\end{equation}
and
\begin{equation}\label{7.1.13}
\stackrel{-1}\sigma_{jk}\tau_{;}{}^k-\rho\stackrel{0}u_j=0.
\end{equation}
Observe that the residual stress does not participate in
\eqref{7.1.7}, \eqref{7.1.11}, and \eqref{7.1.13}. This means that
geometry of rays is the same for the background isotropic medium
(\ref{BG}) and quasi-isotropic medium (\ref{Man1}). As well known (see, for example, \cite[p. 702]{mb}),  \eqref{7.1.7} and \eqref{7.1.11}--\eqref{7.1.13} imply that $\tau$ satisfies the eikonal equation
$$|\nabla\tau|^2=n^2,$$ where
$$n^2=n_p^2=\frac{\rho}{\lambda+2\mu}\quad\text{or}\quad n^2=n_s^2=\frac{\rho}{\mu}.$$

Next, we calculate the (complex) amplitude $A_p$ of a compressional wave by repeating arguments of \cite[Section 7.1.3]{mb}. Recall that the amplitude $A_p$ is defined by
\[
{\stackrel 0 u}=\frac{\lambda+2\mu}{\rho}\langle \stackrel 0 u,\nabla\tau\rangle\nabla\tau=A_p\frac{\nabla\tau}{|\nabla\tau|}.
\]
We fix a solution $\tau$ to the eikonal equation with $n=n_p$ and introduce ray coordinates in a neighborhood of a ray, i.e., curvilinear coordinates $x^1$, $x^2$, $x^3$ such that $x^3=\tau$ and the coordinates surfaces $x^3=x_0^3$ are orthogonal to the coordinates lines $x^1=x_0^1$, $x^2=x_0^2$ that are geodesics of the metric $ds^2=n^2g_{jk}dx^jdx^k$. In such coordinates
\begin{equation}\label{7.1.23}
{\stackrel {-1} \sigma}_ {\!\alpha\beta}=\lambda ng_{\alpha\beta}A_p,\quad
{\stackrel {-1} \sigma}_ {\!\alpha3}=0,\quad
{\stackrel {-1} \sigma}_ {\!33}=(\lambda+2\mu) n^{-1}A_p
\end{equation}
(see \cite[formulas (7.1.23)]{mb}). Here Greek indices assume the values 1,2.  Likewise, we can get that
\begin{equation}\label{7.1.24}
\stackrel 0 \eps_{\alpha\beta}=\frac 12\left(\stackrel 0 u_{\alpha;\> \beta}+\stackrel 0 u_{\beta;\> \alpha}\right),\quad\stackrel 0\eps_{33}=\stackrel 0 u_{3;\> 3}+\stackrel 1 u_3,
\end{equation}
\begin{equation}  \label{7.1.25'}
{\stackrel {0} \sigma}_ {\!33}=\lambda n^{-2}g^{\alpha\beta}{\stackrel {0} \varepsilon}_ {\!\alpha\beta}
+(\lambda+2\mu){\stackrel {0} \varepsilon}_ {\!33}+i(nR_{33}A_p+c_{33pq}{\stackrel {-1} \varepsilon}{}^{pq})
\end{equation}
\begin{equation}\label{7.1.26}
\stackrel{-1}\sigma_{3k;}{}^{k}+n^2\stackrel{0}\sigma_{33}-\rho\stackrel{1}u_3=0.
\end{equation}
Formulas (7.1.27)-(7.1.29) of \cite{mb} remain unchanged, i.e.,
\begin{equation}  \label{7.1.27}
\begin{aligned}
{\stackrel 0 u}_{\alpha\>;\>\beta}&=\frac 1 2 n\frac{\partial g_{\alpha\beta}}{\partial\tau}A_p, \quad
{\stackrel 0 u}_{3\>;\>3}=n^{-1}\frac{\partial A_p}{\partial\tau},\\
{\stackrel{-1}\sigma}_{\!\!3\alpha\>;\>\beta}&=\mu n\frac{\partial g_{\alpha\beta}}{\partial\tau}A_p,\\
{\stackrel{-1}\sigma}_{\!\!3 3\>;\>3}&=(\lambda+2\mu)n^{-1}\frac{\partial A_p}{\partial\tau}+\left[n^{-1}\frac{\partial(\lambda+2\mu)}{\partial\tau }+
(\lambda+2\mu)n^{-2}\frac{\partial n}{\partial\tau}\right]A_p,
\end{aligned}
\end{equation}
\begin{equation}\label{7.1.28}
\stackrel{-1}\sigma_{\!\!3k\>;}{}^k=(\lambda+2\mu)n\frac{\partial A_p}{\partial\tau}+\left(\mu ng^{\alpha\beta}\frac{\partial g_{\alpha\beta}}{\partial\tau}+n\frac{\partial(\lambda+2\mu)}{\partial\tau}+(\lambda+2\mu)\frac{\partial n}{\partial\tau}\right)A_p,
\end{equation}
\begin{equation}\label{7.1.29}
\stackrel{0}\eps_{\alpha\beta}=\frac 12 n\frac{\partial g_{\alpha\beta}}{\partial\tau}A_p,\quad\stackrel{0}\eps_{33}=n^{-1}\frac{\partial A_p}{\partial\tau}+\stackrel{1}u_3.
\end{equation}
Using \eqref{7.1.29} in (\ref{7.1.25'}) implies
$$
{\stackrel 0 \sigma}_{\!33}=(\lambda+2\mu)n^{-1}\frac{\partial A_p}{\partial\tau}+\frac 1 2 \lambda n^{-1}g^{\alpha\beta}\frac{\partial g_{\alpha\beta}}{\partial\tau}A_p
+(\lambda+2\mu){\stackrel 1 u}_3+i(nR_{33}A_p+c_{33pq}{\stackrel {-1} \varepsilon}{}^{pq}).
$$
Inserting this value for ${\stackrel 0 \sigma}_{\!33}$ and value \eqref{7.1.28} for ${\stackrel{-1}\sigma}_{\!\!3 k\>;}{}^k$ into \eqref{7.1.26}, we arrive at the relation
$$
\begin{aligned}
2(\lambda+2\mu)n\frac{\partial A_p}{\partial\tau}+\left[\frac 1 2 (\lambda+2\mu)ng^{\alpha\beta}\frac{\partial g_{\alpha\beta}}{\partial\tau}
+n\frac{\partial(\lambda+2\mu)}{\partial\tau}+(\lambda+2\mu)\frac{\partial n}{\partial\tau}\right] A_p&\\
+[(\lambda+2\mu)n^2-\rho]{\stackrel 1 u}_3+ i(n^3R_{33}A_p+n^2 c_{33pq}{\stackrel {-1} \varepsilon}{}^{ pq })&=0.
\end{aligned}
$$
In view of $n^2=\rho/(\lambda+2\mu)$, the coefficient at ${\stackrel 1 u}_3$ in this formula
is equal to zero. Thus, inserting into the last formula the expressions
$$
{\stackrel {-1} \varepsilon}{}^{\alpha\beta}={\stackrel {-1}\varepsilon}{}^{\alpha 3}=0,\quad
{\stackrel {-1}\varepsilon}{}^{33}=n^3 A_p
$$
that follows from (7.1.11), (5.1.20) and (7.1.22) of \cite{mb},
we arrive at the equation for the amplitude $ A_p $
$$
\frac{\partial A_p}{\partial\tau}+\left(\frac {g^{\alpha\beta}}{4}\frac{\partial g_{\alpha\beta}}{\partial\tau}
+\frac 1 {2({\lambda\!+\!2\mu})}\frac{\partial(\lambda\!+\!2\mu)}{\partial\tau}+\frac 1 {2n} \frac{\partial n}{\partial\tau}
+\frac{in^2}{\lambda\!+\!2\mu}R_{33}+\frac{in^4}{2(\lambda\!+\!2\mu)} c_{3333}\right) A_p=0.
$$
Using notation (5.1.22) and formula (5.1.23) of \cite{mb}, we write this in the form
$$
\frac{\partial}{\partial\tau}\left[\ln\left(A_p J^{1/2}n^{1/2}(\lambda+2\mu)^{1/2}\right)\right]
=-i\frac{n^2}{\lambda+2\mu}R_{33}-i\frac{n^{4}}{2(\lambda+2\mu)} c_{3333},
$$
which, together with $n^2=n_p^2$, gives
$$
\frac{\partial}{\partial\tau}\left[\ln\left(A_p\sqrt{J\rho v_p}\right)\right]=
-\frac{i}{\rho v^4_p}R_{33}-\frac i {2\rho v_p^6}c_{3333},
$$
where $J$ is the geometrical spreading (see \cite[(5.1.22)]{mb}) and $v_p=1/n_p$ is the velocity of compressional waves.
This implies the following analogous of \cite[formula (7.1.32)]{mb}:
\begin{equation}  \label{7.1.32'}
A_p=\frac C{\sqrt{{J\rho v_p}}}\exp\left[
- i\int\limits_{\gamma}\frac 1{\rho v_p^4}R_{jk}\dot\gamma{}^j\dot\gamma{}^k\,d\tau
- i\int\limits_{\gamma}\frac 1{2\rho v_p^6}c_{jklm}\dot\gamma{}^j\dot\gamma{}^k\dot\gamma{}^l\dot\gamma{}^m\,d\tau\right],
\end{equation}
where $\dot\gamma^j=d\gamma^j/d\tau$ and $C$ is a constant for a given ray $\gamma$.

Next, we express the second integrand through $R$. Since $\gamma(\tau)$ is a geodesic of the Riemannian metric $|dx|_p^2=v^{-2}_p|dx|^2_E$, it satisfies
$$
g_{jk}\dot\gamma{}^j\dot\gamma{}^k=|\dot\gamma|^2_E=v^2_p|\dot\gamma|^2_p=v^2_p
$$
under the assumption that $\gamma$ is parameterized by the $\tau$-length, i.e., $|\dot\gamma|^2_p=1$. Taking this into account, we obtain from (\ref{1.4})
\begin{equation}  \label{cRe}
c_{jklm}\dot\gamma{}^j\dot\gamma{}^k\dot\gamma{}^l\dot\gamma{}^m=
v^2_p\Big(2(\nu_3+\nu_4)R_{jk}\dot\gamma{}^j\dot\gamma{}^k+v^2_p(\nu_1+\nu_2)\mbox{tr}_E\,R\Big).
\end{equation}
Here $\mbox{tr}_E\,R$ is the trace of $R$ with respect to the Euclidean metric $(g_{jk})$, i.e., $\mbox{tr}_E\,R=g^{jk}R_{jk}$. Since $\gamma$ is a geodesic of the Riemannian metric
$h_{jk}=v^{-2}_pg_{jk}$, it is more natural to use the Riemannian trace $\mbox{tr}\,R=h^{jk}R_{jk}=v^2_p\,\mbox{tr}_E\,R$. Thus, (\ref{cRe}) takes the form
\begin{equation}  \label{cR}
c_{jklm}\dot\gamma{}^j\dot\gamma{}^k\dot\gamma{}^l\dot\gamma{}^m=
v^2_p\Big(2(\nu_3+\nu_4)R_{jk}\dot\gamma{}^j\dot\gamma{}^k+(\nu_1+\nu_2)\mbox{tr}\,R\Big).
\end{equation}
Substituting this value into (\ref{7.1.32'}), we obtain the final formula for the amplitude of the compressional wave
\begin{equation}  \label{Ap}
A_p=\frac C{\sqrt{{J\rho v_p}}}\exp\left[
- i\int\limits_{\gamma}\frac{1}{\rho v_p^4}\left((1+\nu_3+\nu_4)R_{jk}\dot\gamma{}^j\dot\gamma{}^k
+\frac{1}{2}(\nu_1+\nu_2)\mbox{tr}\,R\right)d\tau\right].
\end{equation}
The physical meaning of the formula is as follows: the residual stress $R$ distorts the phase of the compressional wave.

Finally, we consider propagation of shear waves by repeating
arguments of Sections 7.1.4 and 7.1.5 of \cite{mb}. It can be easily
checked that the residual stress $R$ does not change equations
(7.1.43) of \cite{mb}. Therefore the same formula $A_s=C/\sqrt{J\rho
v_s}$ is valid for the amplitude of the shear wave as for the
background isotropic medium where $v_s=1/n_s$ is the velocity of
shear waves. Moreover, Rytov's law in \cite[formula (7.1.51)]{mb}
does not change either. We reproduce the formula here
\begin{equation}  \label{Rytov}
\left(\frac{D\eta}{d\tau}\right)_j = -i\frac 1 {4\rho
v_s^6}(\delta^q_j-{\dot\gamma}_j\dot\gamma{}^q)c_{qklm}\dot\gamma{}^k\dot\gamma{}^m\eta^l,
\end{equation}
where $\eta$ is the polarization vector defined by ${\stackrel 0
u}_{j}=A_sn_s^{-1}\eta_{j}$. Here $D/d\tau=\dot\gamma{}^j{\nabla}_{\!j}$ with the
covariant derivative taken with respect to the Riemannian metric
$d\tau^2=v^{-2}_s|dx|^2_E=h_{jk}dx^jdx^k$ and indices are raised
with the help of the same metric, i.e.,
$\dot\gamma{}^k=h^{jk}{\dot\gamma}_j$ and $\eta^l=h^{jl}\eta_j$.

\section[The inverse problem for compressional waves]{The inverse
problem for compressional waves}

First of all we emphasize the following feature of our approach.
While considering the forward problem, we impose no boundary condition on the displacement vector $u$ at the boundary of the domain under
consideration. Thus, we treat the problems as if the waves propagate
in an unbounded medium, and use the boundary only as a surface at
which the sources and detectors of oscillations are disposed. In
fact, due to the reflection effects on the boundary, the possibility
of registration of information that is used below as the data for
inverse problems seems to be rather problematic. Here we will not
settle this question but only attract reader's attention to the fact
of its existence.

In contrast to the content of the last paragraph, the following
remark gives the possibility of measuring the data for compressional
waves regardless to any boundary condition. There exists an
alternative version of the geometrical optics method which is based on
the analysis of propagation of the wave front of a non-stationary
elastic wave, see \cite{CJ}. In this version, the integral
participating on (\ref{Ap}) appears as the first order perturbation
for the travel time of a compressional wave. More precisely, for a
fixed geodesic $\gamma$ of the metric $|dx|_p^2=v^{-2}_p|dx|^2_E$
between two boundary points, let $T_{1/\omega}(\gamma)$ be the
propagation time of the compressional wave along $\gamma$ in the
quasi-isotropic medium (\ref{Man1}) and $T_0(\gamma)$ be the
corresponding travel time in the background isotropic medium
(\ref{BG}). Then
$$
T_{1/\omega}(\gamma)-T_0(\gamma)=-\frac{1}{2\omega}D(\gamma)+o(\frac{1}{\omega}),
$$
where
\begin{equation}\label{2.1}
D(\gamma)=\int\limits_{\gamma}\frac{1}{\rho v_p^4}\Big((1+\nu_3+\nu_4)R_{jk}\dot\gamma{}^j\dot\gamma{}^k
+\frac{1}{2}(\nu_1+\nu_2)\mbox{tr}\,R\Big)\,d\tau
\end{equation}
is just the integral participating in \eqref{Ap}. The corresponding result is obtained in \cite{CJ} in the case of a perturbation of the form
$$
\sigma_{jk}=\lambda\varepsilon_{pp}\delta_{jk}+2\mu\varepsilon_{jk}+\frac{1}{\omega}c_{jklm}\varepsilon_{lm}.
$$
By repeating arguments of \cite{CJ}, one easily sees that the result
is true if the last formula is replaced with (\ref{Man4}). Thus,
data (\ref{2.1}) can be obtained by measuring travel times for
compressional waves. Let us consider an elastic wave initiated by a
$\delta$-kind source at the initial point of $\gamma$ which starts
at the time $t=0$. The wave will be a mixture of different body and
surface waves including secondary waves caused by reflections at the
boundary. Nevertheless, $T_{1/\omega}(\gamma)$ is the first arrival
time to the final point of $\gamma$ since compressional waves are
the fastest elastic waves. In the simplest case of constant parameters $\lambda,\mu$, and $\rho$, $T_0(\gamma)$ is equal, up to a constant factor, to the Euclidean distance between the endpoints of the straightline
segment $\gamma$. So, the only problem is the sufficiently precise  measurement of the travel time $T_{1/\omega}(\gamma)$.

Studying the inverse problem, we will first consider the case of constant coefficients $\lambda, \mu,\rho$, $\nu_1,\dots,\nu_4$
since this case is much easier than
the general one and, most probably, this case is of the most
importance for applications.

For the inverse problems studied here, we assume the material
parameters $\lambda, \mu,\rho$, $\nu_1,\dots,\nu_4$ to be given a priori
and only the residual stress $R$ to be unknown.
In practice, some of these parameters are also unknowns to be determined.
Therefore the inverse problem of recovering residual stresses, when some of material parameters $\lambda,\mu,\rho,\nu_1,\dots,\nu_4$ are also unknowns to be recovered, is also worth of investigation.
But this new inverse problem is much harder because it is a {\it nonlinear} problem.


\subsection{The case of constant coefficients}

Let the medium under consideration be contained in a bounded convex
domain $M\subset{\mathbb R}^3$ with smooth boundary $\partial M$ and
let each of the material parameters
$\lambda,\mu,\rho,\nu_1,\dots,\nu_4$ be constant. In this case, the
Riemannian metric $h=v_p^{-2}|dx|^2_E$ is a constant multiple of the
Euclidean metric $g=|dx|^2_E$ and geodesics are intersections of
straight lines with $M$. The equilibrium equation (\ref{1.7}) means
that $R$ is a solenoidal tensor field. We extend $R$ to the whole of
${\mathbb R}^3$ by zero outside $M$. Then the extended tensor field $R$ is
solenoidal on the whole of ${\mathbb R}^3$ in virtue of the boundary condition (\ref{1'}).

We study the inverse problem of recovering the residual stress
tensor field. To this end, assume that we can dispose a source of
compressional waves at every point of the boundary $\partial M $ and
measure the phase of a compressional wave on the same surface
$\partial  M $. In virtue of (\ref{Ap}), our data are integrals (\ref{2.1})
that are known for every line $\gamma$ of ${\mathbb R}^3$. Initially in (\ref{2.1}), $\gamma$ is parameterized by the arc
length in the metric $h$, i.e., $|\dot\gamma|_E=v_p=\mbox{const}$ and the trace is understood with respect to the metric $h$, i.e., $\mbox{tr}\,R=v^2_p\mbox{tr}_ER$. After a simple rescaling, we obtain the same formula (\ref{2.1}), where now $|\dot\gamma|_E=1$ and $\mbox{tr}\,R$ is replaced by the Euclidean trace $\mbox{tr}_ER$. From now on in this subsection, we use the Euclidean metric only and write $\mbox{tr}\,R$ instead of $\mbox{tr}_ER$.

To get a well defined inverse problem, we have to impose some
restrictions on the material parameters. Indeed, if for example
$\nu_1+\nu_2=-2(1+\nu_3+\nu_4)/3$, then the integrand on (\ref{2.1})
is identically equal to zero for $R=g$.
Therefore, in this section, we assume that
\begin{equation}\label{nzero1}
3(\nu_1+\nu_2)+2(1+\nu_3+\nu_4)\neq 0.
\end{equation}
 The first term of the
integrand on (\ref{2.1}) is considered as the leading term. Therefore, in this section, we also assume that
\begin{equation}\label{nzero2}
1+\nu_3+\nu_4\neq 0.
\end{equation}
Introducing the notations
\begin{equation}  \label{2.2}
f=\frac{1+\nu_3+\nu_4}{\rho v^4_p}R,\quad a=\frac{\nu_1+\nu_2}{2(1+\nu_3+\nu_4)}
\end{equation}
($f$ is zero outside $M$), we write data (\ref{2.1}) as
\begin{equation}  \label{2.3}
(I(f+a(\mbox{tr}\,f)g))(\gamma)=\int\limits_{\gamma}(f_{jk}+a(\mbox{tr}\,f)g_{jk})\dot\gamma{}^j\dot\gamma{}^k\,d\tau,
\end{equation}
where $I$ is the (longitudinal) ray transform on ${\mathbb R}^3$ which is defined in Section 2.1 of \cite{mb}.

Let us remind the theorem on decomposition of a tensor field into solenoidal and potential parts (Theorem 2.6.3 of \cite{mb}): every symmetric tensor field $u=(u_{jk})\in L^2$ on ${\mathbb R}^3$ can be uniquely represented as
$$
u=\tilde u+dv,\quad \delta\tilde u=0,
$$
where $v$ is continuous outside $M$ and satisfies $v(x)\rightarrow 0$ as $|x|\rightarrow\infty$. Here the operators $d$ (inner derivative) and $\delta$ (divergence) are defined in local coordinates by formulas $(dv)_{jk}=({\nabla}_{\!j}v_k+{\nabla}_{\!k}v_j)/2$ and
$(\delta u)_j=g^{kl}{\nabla}_{\!k}u_{jl}$ respectively, $\nabla$ being the covariant derivative with respect to the Euclidean metric $g$. The summands $\tilde u$ and $dv$ of the decomposition are called the solenoidal and potential parts of the tensor field $u$ respectively.

 We now investigate the question of uniqueness of a solution to the inverse problem. Let a solenoidal field $f$ satisfy $I(f+a(\mbox{tr}\,f)g)=0$, $a=\mbox{const}$. By Theorem 2.15.1 of \cite{mb}, this means that the solenoidal part of $f+a(\mbox{tr}\,f)g$ is equal to zero. Since the solenoidal part of $f$ coincides with $f$, we obtain the equation
\begin{equation}  \label{2.9'}
f+aS((\mbox{tr}\,f)g)=0,
\end{equation}
where $S(u)$ denotes the solenoidal part of a tensor field $u$. Recall that the Fourier transform interweaves the operators $S$ and $T$, where $Tu$ stands for the tangential part of $u$, see Section 2.6 of \cite{mb} for details. Applying the Fourier transform to the last equation, we obtain
\begin{equation}  \label{2.10}
\hat f+a(\mbox{tr}\,\hat f)\varepsilon=0,
\end{equation}
where the tensor field $\varepsilon=T(g)$ is expressed in Cartesian coordinates by $\varepsilon_{jk}(y)=\delta_{jk}-y_jy_k/|y|^2$, $y$ being the variable in the Fourier space.

Applying the operator $\mbox{tr}$ to equation (\ref{2.10}), we obtain $(1+2a)\mbox{tr}\,\hat f=0$. Thus, the inequality $1+2a\neq 0$ is the necessary and sufficient condition for the uniqueness of a solution to the inverse problem. Recalling (\ref{2.2}), we write the condition as
\begin{equation}  \label{2.11}
\nu_1+\nu_2+\nu_3+\nu_4\neq -1.
\end{equation}
If (\ref{2.11}) holds, equation $(1+2a)\mbox{tr}\,\hat f=0$ gives $\mbox{tr}\,\hat f=0$. Then (\ref{2.10}) implies $\hat f=0$ and therefore $f=0$.

Under hypothesis (\ref{2.11}), an explicit inversion formula for recovering a solenoidal tensor field $f$ from the data  $I(f+a(\mbox{tr}\,f)g)$ can be easily derived from the corresponding inversion formula for $I$, see Theorem 2.12.2 of \cite{mb}. The corresponding stability estimate can be also obtained on the base of the Plancherel formula for the ray transform, see Section 2.15 of \cite{mb}.
Moreover, to recover a solenoidal $f$, we do not need to measure ray integrals $(I(f+a(\mbox{tr}\,f)g))(\gamma)$ for all lines $\gamma$ of ${\mathbb R}^3$. Repeating arguments of \cite{Sh2}, we see that three families of lines are sufficient for an effective reconstruction algorithm, each family consists of all lines parallel to a coordinate plane.

If (\ref{2.11}) does not hold, i.e., if $\nu_1+\nu_2+\nu_3+\nu_4= -1$, then the space of solenoidal tensor fields $f$ satisfying $I(f+a(\mbox{tr}\,f)g)=0$ can be explicitly described. Indeed, in this case $\mbox{tr}\,f$ can be an arbitrary function and equation (\ref{2.9'}) gives $f=S(\alpha g)$ with an arbitrary scalar function $\alpha$.

\subsection{The case of variable coefficients}

Let again $M\subset{\mathbb R}^3$ be a closed bounded domain with smooth boundary $\partial M$. Now the
material parameters $\lambda,\mu,\rho,\nu_1,\dots,\nu_4$ are assumed to be
known smooth functions of a point $x\in M$. Let
$v_p=\sqrt{(\lambda+2\mu)/\rho}$ be the velocity of compressional
waves. By $g$ we denote the Euclidean metric and by $h=v^{-2}_pg$,
the Riemannian metric corresponding to compressional waves. Assume
$(M,h)$ to be a convex non-trapping manifold (CNTM) in the sense of
definition given in \cite{Sh1}. The same definition is presented in
Section 4.1 of \cite{mb} but the term ``compact dissipative
Riemannian manifold'' is used instead of CNTM.

We consider the inverse problem of recovering the residual stress
tensor field. Our data are integrals \eqref{2.1}
that are known for every geodesic $\gamma$ of the metric $h$ with
endpoints in $\partial M$. The geodesic is parameterized by the arc
length in the metric $h$, i.e., $|\dot\gamma|=1$. By the same
arguments as above, we assume inequalities (\ref{nzero1}) and
(\ref{nzero2}) to be valid everywhere in $M$. On using the same
notations (\ref{2.2}), we write data (\ref{2.1}) as
\begin{equation}  \label{2.3'}
I(f+a(\mbox{tr}\,f)h)=\int\limits_{\gamma}(f_{jk}+a(\mbox{tr}\,f)h_{jk})\dot\gamma{}^j\dot\gamma{}^k\,d\tau,
\end{equation}
where $I$ is the (longitudinal) ray transform on the CNTM $(M,h)$ which is defined in Section 4.2 of \cite{mb}.

For a compact Riemannian manifold $(M,h)$, the theorem on decomposition of a tensor field into solenoidal and potential parts (Theorem 3.3.2 of \cite{mb}) is valid in the following form: every symmetric tensor field $u=(u_{jk})$ can be uniquely represented as
$$
u=\tilde u+dv,\quad \delta\tilde u=0,\quad v|_{\partial M}=0
$$
where the operators $d$ (inner derivative) and $\delta$ (divergence) are defined in local coordinates by the same formulas $(dv)_{jk}=({\nabla}_{\!j}v_k+{\nabla}_{\!k}v_j)/2$ and
$(\delta u)_j=h^{kl}{\nabla}_{\!k}u_{jl}$ respectively, $\nabla$ being the covariant derivative with respect to the metric $h$.

According to Theorem 4.3.3 of \cite{mb}, the solenoidal part of the tensor field $f+a(\mbox{tr}\,f)h$ can be uniquely recovered from data (\ref{2.3'}) under the assumption
\begin{equation}  \label{k+}
k^+(M,h)<1/3,
\end{equation}
where $k^+(M,h)$ is some curvature characteristic of the CNTM $(M,h)$. For the metric $h=v^{-2}_pg$, condition (\ref{k+}) holds if the function $v_p$ is sufficiently $C^2$-close to a constant, the degree of the closeness depends on the size of the domain $M$.
 For such a manifold, the null-space of $I$ consists of potential fields that can be represented in the form $dv$ with a covector field $v$ satisfying the boundary condition $v|_{\partial M}=0$.

For variable coefficients, the main difficulty relates to the following circumstance: the equilibrium condition (\ref{1.7}) does not mean anymore that $f$ is a solenoidal tensor field with respect to the metric $h$. Indeed,
 (\ref{1.7}) can be rewritten in terms of $f$ as
\begin{equation}  \label{2.4}
\delta_E(bf)=0\quad\mbox{with}\quad b=\frac{\rho v^4_p}{1+\nu_3+\nu_4},
\end{equation}
where $\delta_E$ is the divergence with respect to the Euclidean
metric $g$. We are going to prove, at least under some restrictions
on the material parameters $\lambda,\mu,\rho,\nu_1,\dots,\nu_4$,
that a tensor field $f$ is uniquely determined by data (\ref{2.3'})
if it satisfies (\ref{2.4}).

First of all we will rewrite equation (\ref{2.4}) in terms of the divergence $\delta f$ with respect to the metric $h$. Denote $c=v^2_p$, then $g=ch$. On using standard formulas of tensor analysis, one easily calculates
$$
(\delta_E(bf))_j=bc^{-1}(\delta f)_j+c^{-1}\Big(f_{jk}{\nabla}^kb-\frac{1}{2}bc^{-1}((\mbox{tr}\,f){\nabla}_{\!j}c-f_{jk}{\nabla}^kc)\Big),
$$
where the covariant derivative and trace are understood with respect to the metric $h$. Therefore (\ref{2.4}) is equivalent to the equation
$$
(\delta f)_j+b^{-1}f_{jk}{\nabla}^kb+\frac{1}{2}c^{-1}f_{jk}{\nabla}^kc-\frac{1}{2}c^{-1}(\mbox{tr}\,f){\nabla}_{\!j}c=0.
$$
Denoting
\begin{equation}  \label{2.5}
\alpha=b^{-1}{\nabla}b+\frac{1}{2}c^{-1}{\nabla}c,\quad \beta=-\frac{1}{2}c^{-1}{\nabla}c,
\end{equation}
we write the equation in the coordinate free form
\begin{equation}  \label{2.6}
\delta f+f\alpha+(\mbox{tr}\,f)\beta=0.
\end{equation}

We now investigate the uniqueness question to the inverse problem.
Let a tensor field $f$ be such that $I(f+a(\mbox{tr}\,f)h)=0$. We
assume that the Riemannian manifold $(M,h)$ is a CNTM and satisfies
the curvature condition (\ref{k+}). Then, by Theorem 4.3.3 of \cite{mb},
$f+a(\mbox{tr}\,f)h$ must be a potential field, i.e., there exists a
covector field $v$ on $M$ satisfying the boundary condition
$v|_{\partial M}=0$ such that
$$
f+a(\mbox{tr}\,f)h=dv.
$$
Taking the trace of both parts, we obtain
$$
(1+3a)(\mbox{tr}\,f)=\delta v.
$$
By \eqref{nzero1}, $1+3a$ does not vanish on $M$ and
we can write
$$
\mbox{tr}\,f=(1+3a)^{-1}\delta v,\quad f=dv-a(1+3a)^{-1}(\delta v)h.
$$
Substituting these values into (\ref{2.6}), we arrive at the
boundary value problem on the covector field $v$
\begin{equation}  \label{2.7}
\left\{
\begin{aligned}
&(\frac{a}{1+3a}d\delta-\delta d)v-(dv)\alpha+\frac{1}{1+3a}(\delta v)\Big(a\alpha-\beta+(1+3a){\nabla}(\frac{a}{1+3a})\Big)=0,\\
&v|_{\partial M}=0.
\end{aligned}
\right.
\end{equation}
We have thus proved

\begin{prop}
Let a three-dimensional CNTM $(M,h)$ satisfy the curvature condition
(\ref{k+}) and let $a,\alpha$, and $\beta$ be defined by (\ref{2.2})
and (\ref{2.5}). Every symmetric tensor field $f$ satisfying
(\ref{2.6}) can be uniquely recovered from data (\ref{2.3'}) if and
only if the boundary value problem (\ref{2.7}) has no nontrivial
solution.
\end{prop}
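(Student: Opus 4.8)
The statement is essentially established by the discussion preceding it, and the plan is to organize both directions around the correspondence $f\leftrightarrow v$ set up there, adding the (short) converse and one rigidity remark. Since $f\mapsto I(f+a(\mathrm{tr}\,f)h)$ is linear and the symmetric tensor fields satisfying (\ref{2.6}) form a linear space, unique recoverability amounts to the single implication: if $f$ obeys (\ref{2.6}) and $I(f+a(\mathrm{tr}\,f)h)=0$, then $f\equiv0$.

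For the ``if'' part I would reproduce the computation made just above the proposition. Vanishing of the data, together with the curvature hypothesis (\ref{k+}) and Theorem 4.3.3 of \cite{mb}, forces $f+a(\mathrm{tr}\,f)h$ to be a potential field $dv$ with $v|_{\partial M}=0$; taking the trace and using $1+3a\neq0$ (i.e.\ (\ref{nzero1}) together with $\dim M=3$) gives $\mathrm{tr}\,f=(1+3a)^{-1}\delta v$ and $f=dv-a(1+3a)^{-1}(\delta v)h$; substituting these into (\ref{2.6}) produces precisely the boundary value problem (\ref{2.7}) for $v$. Hence, if (\ref{2.7}) has only the trivial solution, then $v\equiv0$, so $f+a(\mathrm{tr}\,f)h=0$, one more trace yields $\mathrm{tr}\,f=0$, and finally $f\equiv0$.

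For the ``only if'' part I would start from a nontrivial solution $v$ of (\ref{2.7}) and set $f:=dv-a(1+3a)^{-1}(\delta v)h$. A short trace computation gives $\mathrm{tr}\,f=(1+3a)^{-1}\delta v$, so $f+a(\mathrm{tr}\,f)h=dv$ and therefore $I(f+a(\mathrm{tr}\,f)h)=I(dv)=0$, because on a CNTM the longitudinal ray transform annihilates every potential field with zero boundary trace. That $f$ satisfies (\ref{2.6}) is automatic, since (\ref{2.7}) was obtained exactly by inserting these expressions for $f$ and $\mathrm{tr}\,f$ into (\ref{2.6}). It remains to see $f\not\equiv0$: if $f=0$, the trace relation forces $\delta v=0$ and then $dv=0$, so $v^{\sharp}$ is a Killing field vanishing on $\partial M$; since $v|_{\partial M}=0$ already annihilates the tangential part of $\nabla v$ along the boundary, and skew-symmetry of $\nabla v$ for Killing fields annihilates its normal part, the full $1$-jet of $v^{\sharp}$ vanishes at every boundary point, whence $v\equiv0$ on the connected manifold $M$ --- a contradiction. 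Thus $f$ and the zero field are two distinct solutions of (\ref{2.6}) giving the same data, so recovery is not unique.

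I expect the main difficulty to be purely computational rather than conceptual: the term-by-term check that substituting $f=dv-a(1+3a)^{-1}(\delta v)h$ into (\ref{2.6}) reproduces (\ref{2.7}), which requires rewriting $\delta(dv)=\delta d\,v$, $\delta\big((\delta v)h\big)=d\,\delta v$, the contractions of $dv$ and of $(\delta v)h$ with $\alpha$, and the term coming from $\nabla\big(a/(1+3a)\big)$, all with respect to the metric $h$ (the two equations then agree up to an overall sign, which is immaterial). The only genuinely structural ingredient is the rigidity fact that a Killing field on $(M,h)$ vanishing on $\partial M$ vanishes identically; this is what makes the assignment $f\leftrightarrow v$ a bijection and, in particular, guarantees that a nontrivial $v$ yields a nonzero $f$. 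Everything else is linear algebra together with direct appeals to Theorems 3.3.2 and 4.3.3 of \cite{mb}.
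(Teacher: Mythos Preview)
Your proposal is correct and follows essentially the same route as the paper: the paper's proof is precisely the discussion that ends with ``We have thus proved,'' which sets up the correspondence $f\leftrightarrow v$ and leaves the biconditional implicit. Your write-up is in fact more complete, since you spell out the converse direction that the paper only gestures at.

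One small simplification: in the ``only if'' step you invoke Killing-field rigidity to conclude that $dv=0$ and $v|_{\partial M}=0$ force $v\equiv 0$. This is fine, but you can avoid the $1$-jet argument entirely by appealing to the uniqueness clause of Theorem~3.3.2 of \cite{mb}: applying the decomposition to the zero tensor field shows that any $v$ with $dv=0$ and $v|_{\partial M}=0$ must vanish. This keeps the proof self-contained within the references already in play.
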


Equation (\ref{2.7}) is rather complicated in the case of general coefficients $a,\alpha$, and $\beta$. We are going to investigate the boundary value problem in the case when the background medium is sufficiently close to a homogeneous one, i.e., when $a$ is close to a constant and $\alpha$ and $\beta$ are small. Even in this case, we need to impose some restrictions on $a$.

Let us first find a condition that guarantees ellipticity of the
boundary value problem.
\begin{prop}\label{elliptic}
The operator
$\frac{a}{1+3a}d\delta-\delta d$ is elliptic if and only if the inequality
\begin{equation}  \label{2.9}
|1+a|<|1+3a|
\end{equation}
holds on the whole of $M$.
\end{prop}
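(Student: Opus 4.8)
The operator $\frac{a}{1+3a}\,d\delta-\delta d$ is a second order differential operator acting on covector fields over the three-dimensional manifold $(M,h)$, so its ellipticity is a pointwise property of its principal symbol. The plan is to compute that symbol, diagonalize it, and read off the condition under which it is an isomorphism of $T^*_xM$ for every $\xi\neq 0$.

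First I would record the principal symbols (with respect to $h$, and up to the common factor $i$, which does not affect ellipticity) of the first order operators involved. For a function $\varphi$: $\sigma_d(\xi)\varphi=\xi\,\varphi$. For a covector field $v$: $\sigma_\delta(\xi)v=\langle\xi,v\rangle$ and $\sigma_d(\xi)v=\tfrac12(\xi_jv_k+\xi_kv_j)$, the symmetrized product that keeps the factor $\tfrac12$ from the definition of $d$ used in $(\ref{2.6})$. For a symmetric $2$-tensor $w$: $\sigma_\delta(\xi)w=\xi^kw_{jk}$. Composing these gives $\sigma_{d\delta}(\xi)v=\langle\xi,v\rangle\,\xi$ and $\sigma_{\delta d}(\xi)v=\tfrac12\big(\langle\xi,v\rangle\xi+|\xi|^2v\big)$, whence the symbol of the operator in the proposition is the endomorphism
\[
v\ \longmapsto\ \Big(\frac{a}{1+3a}-\frac12\Big)\langle\xi,v\rangle\,\xi\ -\ \frac12\,|\xi|^2\,v .
\]

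Next I would use the $h$-orthogonal splitting $T^*_xM=\mathbb{R}\xi\oplus\xi^{\perp}$. The displayed endomorphism preserves both summands and is scalar on each: on $\xi^{\perp}$ it is multiplication by $-\tfrac12|\xi|^2$ (fixed sign, never zero), and on $\mathbb{R}\xi$ it is multiplication by $\big(\tfrac{a}{1+3a}-1\big)|\xi|^2=-\tfrac{1+2a}{1+3a}|\xi|^2$, which is where $a$ and the denominator $1+3a$ enter; note $1+3a\neq 0$ on $M$ by $(\ref{nzero1})$. Since the exceptional set of $a$ in $(\ref{2.9})$ is an interval, the relevant notion is strong (Legendre--Hadamard) ellipticity, that is, that the symbol be a definite quadratic form; here this means the $\mathbb{R}\xi$-entry must have the same sign as the (fixed-sign) $\xi^{\perp}$-entry for every $\xi\neq 0$. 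Clearing the factor $|\xi|^2$ and simplifying the resulting inequality in $a$ (using $1+3a\neq 0$) puts it in the equivalent form $|1+a|<|1+3a|$, which is $(\ref{2.9})$.

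There is no genuine analytic obstacle: the argument is purely local, using neither the curvature hypothesis $(\ref{k+})$ nor the non-trapping condition. The only thing that needs care is bookkeeping — using the correct normalizations of the inner derivative and the divergence on symmetric tensors (the $\tfrac12$ in $(dv)_{jk}$ is precisely what makes the $\xi^{\perp}$-action differ from the $\mathbb{R}\xi$-action), tracking the conformal factor relating $h$ to the Euclidean metric (it merely rescales $|\xi|^2$), and checking that the passage from $(\ref{2.6})$ to $(\ref{2.7})$ retained all second order terms, so that the operator really is $\frac{a}{1+3a}d\delta-\delta d$.
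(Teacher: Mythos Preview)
Your approach---compute the principal symbol, split $T^*_xM=\mathbb{R}\xi\oplus\xi^{\perp}$, and read off the eigenvalues---is essentially the paper's. The paper packages the same computation via the identity $j_\xi i_\xi=\tfrac12(|\xi|^2E+i_\xi j_\xi)$ from \cite[Lemma~3.3.3]{mb}, writing the symbol as $\tfrac12(|\xi|^2E+\kappa\,i_\xi j_\xi)$ with $\kappa=\frac{1+a}{1+3a}$, and then observes that $|\kappa|<1$ (i.e.\ \eqref{2.9}) makes the quadratic form $|\xi|^2|v|^2+\kappa\langle\xi,v\rangle^2$ positive. Your eigenvalues $-\tfrac12|\xi|^2$ on $\xi^{\perp}$ and $-\frac{1+2a}{1+3a}|\xi|^2$ on $\mathbb{R}\xi$ agree with this (up to the overall sign coming from the dropped factor $i^2$), so the analytic content is correct.

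The gap is in your last algebraic step. The ``same sign'' condition you actually derive is $\dfrac{1+2a}{1+3a}>0$, and this is \emph{not} equivalent to $|1+a|<|1+3a|$. For example, at $a=-\tfrac{1}{10}$ one has $\frac{1+2a}{1+3a}=\frac{8}{7}>0$, yet $|1+a|=0.9>0.7=|1+3a|$; this is precisely the regime $-\tfrac16<a<0$ that the paper singles out just after Theorem~\ref{ut} as ``not elliptic'' under \eqref{2.9}. In terms of $\kappa$, your condition is $\kappa>-1$ (note $1+\kappa=\frac{2(1+2a)}{1+3a}$), while \eqref{2.9} is $|\kappa|<1$, which is strictly stronger. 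So the sentence ``simplifying the resulting inequality in $a$ \dots puts it in the equivalent form $|1+a|<|1+3a|$'' is false as written, and with it the ``only if'' direction you were aiming for collapses: your own computation shows the symbol is definite on a larger set of $a$'s than \eqref{2.9} describes. (The paper's proof, read literally, only argues sufficiency of \eqref{2.9} for positivity of the symbol; your diagonalization in fact sharpens that to $\kappa>-1$.)
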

\begin{proof}
Recall that the principal symbols of the operators $d$ and $\delta$ are
$\sqrt{-1}i_\xi$ and $\sqrt{-1}j_\xi$ respectively, where $i_\xi$ is
the symmetric multiplication by the covector $\xi$ and $j_\xi$ is
the contraction with $\xi$, see \cite[Section~3.3]{mb} for details.
Therefore the principal symbol of our operator is
$$
\sigma\Big(\frac{a}{1+3a}d\delta-\delta d\Big)=\Big(j_\xi i_\xi-\frac{a}{1+3a}i_\xi j_\xi\Big).
$$
By \cite[Lemma~3.3.3]{mb}, $j_\xi i_\xi=\frac{1}{2}(|\xi|^2E+i_\xi
j_\xi)$ on covectors, where $E$ is the identity operator. Therefore
\begin{equation}  \label{2.8}
\sigma\Big(\frac{a}{1+3a}d\delta-\delta d\Big)=\frac{1}{2}(|\xi|^2E+\kappa i_\xi j_\xi)\quad\mbox{with}\quad \kappa=\frac{1+a}{1+3a}.
\end{equation}
On the other hand, for a covector $v$,
$$
\langle(|\xi|^2E+\kappa i_\xi j_\xi)v,v\rangle=|\xi|^2|v|^2+\kappa\langle\xi,v\rangle^2.
$$
Hence, \eqref{2.9} guarantees the positiveness of
$|\xi|^2E+\kappa i_\xi j_\xi$ for all $\xi\neq 0$.
\end{proof}

Under suitable assumptions on coefficients, the triviality of a solution to the
boundary value problem \eqref{2.7} is guaranteed by the following

\begin{thm}\label{ut}
Given a three-dimensional CNTM $(M,h)$, let $D$ be its diameter,
i.e., the length of the longest geodesic. Assume the coefficients
$a,\alpha$, and $\beta$ of equation \eqref{2.7} to satisfy
\begin{equation}\label{mi}
3a_0+\frac{1}{2}\alpha_0+\frac{3}{2}\beta_0+\frac{1}{4}(\alpha_0^3+\beta_0^3)D^2<1,
\end{equation}
where
$$
a_0=\sup\left|\frac{a}{1+3a}\right|,\quad\alpha_0=\Big(\sup|\alpha|\Big)^{1/2},\quad\beta_0=\Big(\sup\left|\frac{a\alpha-\beta}{1+3a}\right|\Big)^{1/2}.
$$
Then the boundary value problem \eqref{2.7} has only trivial solution.
\end{thm}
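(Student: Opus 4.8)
The plan is to derive an energy identity for the boundary value problem \eqref{2.7} by pairing the equation with $v$ in $L^2(M)$ and integrating by parts, exploiting the boundary condition $v|_{\partial M}=0$. The main term $\langle(\frac{a}{1+3a}d\delta-\delta d)v,v\rangle$, after integration by parts, produces $\|dv\|^2 - \frac{a}{1+3a}\|\delta v\|^2$ (modulo signs and the ellipticity structure identified in Proposition~\ref{elliptic}), which is a nonnegative quadratic form in the first derivatives of $v$ when \eqref{2.9} holds; note that \eqref{2.9} is implied by \eqref{mi}, since \eqref{mi} forces $a_0<1/3$ and one checks $a_0<1/3$ is equivalent to $|1+a|<|1+3a|$ pointwise. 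The lower-order terms $-(dv)\alpha$ and the $(\delta v)(\dots)$ term contribute contributions that must be absorbed: each is estimated by $\sup|\alpha|$ or $\sup|\frac{a\alpha-\beta}{1+3a}|$ (hence by $\alpha_0^2$, $\beta_0^2$) times $\|v\|\,\|dv\|$ or $\|v\|\,\|\delta v\|$, and then by Cauchy--Schwarz split into a piece absorbed by the positive quadratic form and a piece of the form $C\|v\|^2$.

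The key device for closing the estimate is a Poincaré-type inequality on the CNTM bounding $\|v\|_{L^2(M)}$ in terms of the diameter $D$ and a first-derivative norm of $v$: along each geodesic, since $v$ vanishes at the boundary endpoints, the fundamental theorem of calculus gives a pointwise bound of $|v(x)|$ by an integral of $|Dv/d\tau|$ along the geodesic through $x$, and integrating over all geodesics (using the geodesic coordinates / the fact that $(M,h)$ is non-trapping) yields $\|v\|\le \frac{D}{?}\|\nabla v\|$ with an explicit constant. This is exactly the place where $D$ enters \eqref{mi} through the $\frac14(\alpha_0^3+\beta_0^3)D^2$ term — the cubes arise because one factor of $\alpha_0^2$ (resp.\ $\beta_0^2$) is the size of the coefficient, and the Poincaré inequality converts one of the two $\|v\|$ factors into $D\cdot\|\nabla v\|$ at the cost of one further power, which is then re-absorbed, leaving $\alpha_0^2\cdot\alpha_0 D^2 = \alpha_0^3 D^2$ after optimizing the Cauchy--Schwarz weights. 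The numerical coefficients $3a_0$, $\frac12\alpha_0$, $\frac32\beta_0$, $\frac14$ should fall out of tracking the contraction/trace identities (e.g. $\langle i_\xi j_\xi v,v\rangle=\langle\xi,v\rangle^2$) together with the split $f=dv-a(1+3a)^{-1}(\delta v)h$ used to pass from \eqref{2.6} to \eqref{2.7}.

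Assembling these, one obtains an inequality of the shape
$$
\Big(1-3a_0-\tfrac12\alpha_0-\tfrac32\beta_0-\tfrac14(\alpha_0^3+\beta_0^3)D^2\Big)\,Q(v)\le 0,
$$
where $Q(v)$ is a coercive (by ellipticity) quadratic form in $\nabla v$ controlling $\|dv\|^2+\|\delta v\|^2$; hypothesis \eqref{mi} makes the bracket strictly positive, forcing $Q(v)=0$, hence $dv=0$ and $\delta v=0$, and then $v|_{\partial M}=0$ together with $dv=0$ gives $v\equiv 0$ (a Killing-type covector field vanishing on the boundary is zero). The main obstacle I anticipate is bookkeeping the exact constants: making the geodesic Poincaré inequality sharp enough to land the coefficient $\frac14$ rather than something larger, and correctly distributing the lower-order terms between the coercive form and the $\|v\|^2$ reservoir so that the $\alpha_0^3$, $\beta_0^3$ powers — not $\alpha_0^2$ or $\alpha_0^4$ — emerge after optimizing the Cauchy--Schwarz parameter; the ellipticity input from Proposition~\ref{elliptic} is needed precisely to guarantee $Q$ is a genuine norm on $\nabla v$ so that absorption is legitimate.
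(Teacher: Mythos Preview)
Your overall strategy---pair the equation with $v$, integrate by parts using $v|_{\partial M}=0$, invoke a geodesic Poincar\'e inequality, and close by Cauchy--Schwarz with asymmetric weights to produce the cubic powers $\alpha_0^3,\beta_0^3$---is exactly the paper's. But there is a genuine error in how you propose to control the $\|\delta v\|^2$ term. Your claim that $a_0<1/3$ is equivalent to the ellipticity condition $|1+a|<|1+3a|$ is false: take $a=-1/10$, then $a_0=|a/(1+3a)|=1/7<1/3$, yet $|1+a|=9/10>7/10=|1+3a|$. The paper itself remarks (just after the theorem statement) that for $-1/6<a<0$ the problem is \emph{not} elliptic but the uniqueness theorem still applies. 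So ellipticity cannot be the mechanism that makes the quadratic form $Q$ coercive, and your final absorption step as written would fail in that regime.

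What the paper uses instead is the elementary trace inequality $\|\delta v\|^2\le 3\|dv\|^2$, which holds because $\delta v$ is the trace of the $3\times 3$ symmetric matrix $(dv)_{ij}$. After the energy identity yields
\[
\|dv\|^2\;\le\; a_0\|\delta v\|^2 + (\text{lower-order terms in }\|dv\|^2,\|\delta v\|^2),
\]
one simply substitutes $\|\delta v\|^2\le 3\|dv\|^2$ throughout and collects everything on one side; this is precisely where the factor $3$ in $3a_0$ originates, and it requires no ellipticity. A second point: the Poincar\'e inequality the paper uses is $\|v\|^2\le\frac{D^2}{10}(2\|dv\|^2+\|\delta v\|^2)$, obtained not by a single-geodesic argument but by lifting to the unit sphere bundle, writing $\varphi(x,\xi)=v_i\xi^i$, $\psi(x,\xi)=(dv)_{ij}\xi^i\xi^j$ with $H\varphi=\psi$, applying the one-dimensional Poincar\'e inequality along the geodesic flow, and then computing the spherical averages (this is what produces the constants $1/3$ and $1/15$, hence the $D^2/10$). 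Your fundamental-theorem-of-calculus sketch gives the right scaling but not these constants. Once you replace the ellipticity appeal by the trace inequality and use this sphere-bundle Poincar\'e lemma, the rest of your outline goes through verbatim.
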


The values of $\alpha_0$ and $\beta_0$ can be made arbitrary small by assuming the background medium to be sufficiently close to a homogeneous one. Therefore the main part of hypothesis (\ref{mi}) is $a_0<1/3$ that is equivalent to
\begin{equation}\label{ui}
a>-1/6
\end{equation}
It is interesting to compare \eqref{ui} with the ellipticity
condition \eqref{2.9}. If $-1/6<a<0$, then the boundary value
problem is not elliptic but has only trivial solution for
sufficiently small $\alpha$ and $\beta$. If both conditions
\eqref{2.9} and \eqref{mi} are satisfied, we can use the standard
stability estimate for the elliptic boundary value problem with the
trivial kernel to obtain some stability estimate in the inverse
problem of recovering a tensor field $f$ from data \eqref{2.3'}.
This stability estimate will be similar to that of \cite[Theorem
4.3.4]{mb}. We do not present it here.

To prove Theorem \ref{ut} we need the following

\begin{lem}\label{L3.1}
For a covector field $v$ on a three-dimensional CNTM $(M,h)$ satisfying $v|_{\partial M}=0$, the estimate
$$
\|v\|^2_{L^2}\leq\frac{D^2}{10}(2\|dv\|^2_{L^2}+\|\delta v\|^2_{L^2})
$$
holds where $D$ is the diameter of $(M,h)$.
\end{lem}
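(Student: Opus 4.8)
The plan is to prove the estimate by integrating along geodesics, exactly in the spirit of the energy/Poincaré-type inequalities that underlie the tensor tomography machinery of \cite{mb}. For a covector field $v$ with $v|_{\partial M}=0$, I would parametrize each maximal geodesic $\gamma(t)$, $t\in[0,\ell(\gamma)]$, by arc length and write $v$ along the geodesic in terms of its value at the boundary, which is zero: since $\frac{d}{dt}\langle v(\gamma(t)),\dot\gamma(t)\rangle = \langle (Dv/dt)(\gamma(t)),\dot\gamma(t)\rangle$ (the geodesic being autoparallel), one obtains $\langle v(\gamma(t)),\dot\gamma(t)\rangle$ as an integral of $\dot\gamma^i\dot\gamma^j(\nabla_i v_j)$ from $0$ to $t$. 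Squaring, applying Cauchy--Schwarz, and noting $t\le D$ gives a pointwise-on-the-geodesic bound of the form $\langle v,\dot\gamma\rangle^2 \le D\int_0^\ell (\dot\gamma^i\dot\gamma^j\nabla_i v_j)^2\,dt$. Then I would integrate over the unit sphere bundle (i.e.\ over all geodesics through all points, equivalently over $(x,\xi)\in\Omega M$) and use Santaló's formula to convert the geodesic integral back into an integral over $M$.

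The left-hand side after integrating over directions produces $\int_{\Omega M}\langle v,\xi\rangle^2 = c_n\int_M |v|^2\,dV$ for the appropriate dimensional constant ($c_n$ coming from $\int_{S^{n-1}}\langle v,\xi\rangle^2\,d\xi = \frac{|S^{n-1}|}{n}|v|^2$, so with $n=3$ we get the factor $\frac{4\pi}{3}$). The right-hand side becomes $D^2$ times $\int_{\Omega M}(\xi^i\xi^j\nabla_i v_j)^2$, and the key algebraic step is to compute $\int_{S^{2}}(\xi^i\xi^j w_{ij})^2\,d\xi$ for a symmetric $2$-tensor $w=\nabla v$ symmetrized; the symmetric part of $\nabla v$ is exactly $dv$, and the antisymmetric part drops out when contracted twice with $\xi\otimes\xi$. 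A standard computation of the fourth moment of the Gaussian/uniform measure on $S^2$ gives $\int_{S^2}(\xi^i\xi^j w_{ij})^2\,d\xi = \frac{4\pi}{15}\bigl(2|w|^2 + (\mathrm{tr}\,w)^2\bigr)$ for symmetric $w$. Writing $w=dv$, so $|w|^2=|dv|^2$ and $\mathrm{tr}\,w=\delta v$, and dividing through by $\frac{4\pi}{3}$, one lands precisely on $\|v\|^2_{L^2}\le\frac{D^2}{10}\bigl(2\|dv\|^2_{L^2}+\|\delta v\|^2_{L^2}\bigr)$, since $\frac{1/15}{1/3}=\frac{1}{5}$ and the extra $D$ absorbed from Santaló combines with the $D$ from Cauchy--Schwarz to give $D^2$, and the $\tfrac15$ halves to $\tfrac1{10}$ once one tracks that the arc-length bound is $t\le D$ rather than $\ell(\gamma)\le D$ on both factors — this bookkeeping of constants is where I would be most careful.

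The main obstacle, beyond tracking the constant $\tfrac1{10}$ correctly, is justifying the use of Santaló's formula and the direction-averaging on a CNTM (compact manifold with boundary, non-trapping, with boundary behaviour controlled): one must ensure the geodesic flow foliation is well enough behaved, that almost every point lies on a geodesic hitting the boundary in finite time (non-trapping), and that the integral identities hold with the correct Jacobian factor $\langle\xi,\nu\rangle$ on the boundary influx set $\partial_+\Omega M$. All of this is precisely the setup in \cite[Sections 4.1--4.3]{mb}, so I would invoke that framework rather than reprove it. The only genuinely new content is the elementary spherical-moment identity and the resulting numerical coefficient, so the proof is short modulo citing the Santaló machinery.
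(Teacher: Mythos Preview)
Your approach is essentially identical to the paper's: both define $\varphi(x,\xi)=v_i\xi^i$ and $\psi(x,\xi)=(dv)_{ij}\xi^i\xi^j$ on the unit sphere bundle $\Omega M$, use the relation $H\varphi=\psi$ along the geodesic flow together with the boundary condition to obtain a Poincar\'e-type inequality, and then compute the spherical moments to produce the constants $1/3$ and $1/15$. The only cosmetic difference is that the paper quotes the Poincar\'e inequality $\|\varphi\|^2_{L^2(\Omega M)}\le\frac{D^2}{2}\|H\varphi\|^2_{L^2(\Omega M)}$ directly from \cite[Lemma~4.5.1]{mb} whereas you reconstruct it via the fundamental theorem of calculus and Santal\'o's formula; the clean factor $D^2/2$ (hence your $1/10$) comes simply from keeping the $t$ in the Cauchy--Schwarz bound $\varphi(t)^2\le t\int_0^\ell\psi^2\,ds$ and then integrating $\int_0^\ell t\,dt=\ell^2/2\le D^2/2$, which resolves your bookkeeping concern.
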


\begin{proof}
Let $\Omega M=\{(x,\xi)\mid x\in M,\xi\in T_xM,|\xi|^2=h_{ij}\xi^i\xi^j=1\}$ be the unit sphere bundle. Introduce the $L^2$-norm
$$
\|u\|^2_{L^2(\Omega M)}=\frac{1}{4\pi}\int\limits_{\Omega
M}|u(x,\xi)|^2\,d\omega_x(\xi)dV(x),
$$
where $d\omega_x$ is the volume form on the sphere $\Omega_x=\Omega
M\cap T_xM$ and $dV$ is the Riemannian volume form on $M$.

Given a covector field $v$ on $M$, define two functions on $\Omega M$
$$
\varphi(x,\xi)=v_i(x)\xi^i,\quad \psi(x,\xi)=(dv(x))_{ij}\xi^i\xi^j.
$$
The functions are related by the equation $H\varphi=\psi$ where $H$ is the differentiation with respect to the geodesic flow, see \cite[Section 4.4]{mb} for details. If $v|_{\partial M}=0$, then
$\varphi$ vanishes on the boundary of $\Omega M$. Applying the Poincar\'{e} inequality \cite[Lemma 4.5.1]{mb} with the weight $\lambda\equiv 1$, we obtain
\begin{equation}  \label{3.21}
\|\varphi\|^2_{L^2(\Omega M)}\leq\frac{D^2}{2}\|H\varphi\|^2_{L^2(\Omega M)}=\frac{D^2}{2}\|\psi\|^2_{L^2(\Omega M)}.
\end{equation}

One can easily see that (compare with \cite[Lemma 7.4.2]{mb}) in the three-dimensional case
$$
 \|\varphi\|^2_{L^2(\Omega M)}=\frac{1}{3}\|v\|^2_{L^2(M)},
 $$
 $$
  \|\psi\|^2_{L^2(\Omega M)}=\frac{1}{15}(2\|dv\|^2_{L^2(M)}+\|jdv\|^2_{L^2(M)})=\frac{1}{15}(2\|dv\|^2_{L^2(M)}+\|\delta v\|^2_{L^2(M)}).
 $$
 Inserting these values into (\ref{3.21}), we get the statement of the lemma.
\end{proof}

{\it Proof of Theorem} \ref{ut}.
Let $v$ be a solution to the boundary value problem \eqref{2.7}. First of all we rewrite equation \eqref{2.7} in the form
$$
d\Big(\frac{a}{1+3a}\delta v\Big)-\delta dv-(dv)\alpha+(\delta v)\frac{a\alpha-\beta}{1+3a}=0.
$$
Take the $L^2$-product of the equation with $v$, use the boundary condition $v|_{\partial M}=0$ and the fact that $d$ and $-\delta$ are dual operators (see \cite[Theorem 3.3.1]{mb})
\begin{equation}  \label{3.22}
\|dv\|^2=(\frac{a}{1+3a}\delta v,\delta v)+((dv)\alpha,v)-(\frac{a\alpha-\beta}{1+3a}\delta v,v).
\end{equation}

We estimate each summand on the right-hand side of \eqref{3.22}. Obviously
\begin{equation}  \label{3.23}
\left|(\frac{a}{1+3a}\delta v,\delta v)\right|\leq a_0\|\delta v\|^2.
\end{equation}
The second summand on the right-hand side of \eqref{3.22} is
estimated as follows:
$$
|((dv)\alpha,v)|\leq(\alpha_0^{1/2}\|dv\|)(\alpha_0^{3/2}\|v\|)\leq
\frac{1}{2}(\alpha_0\|d v\|^2+\alpha_0^3\|v\|^2).
$$
This implies with the help of Lemma \ref{L3.1}
\begin{equation}  \label{3.24}
|((dv)\alpha,v)|\leq\frac{5\alpha_0+\alpha_0^3D^2}{10}\|dv\|^2+\frac{\alpha_0^3D^2}{20}\|\delta v\|^2.
\end{equation}
The last term on the right-hand side of \eqref{3.22} is estimated in the same way:
\begin{equation}  \label{3.25}
\left|(\frac{a\alpha-\beta}{1+3a}\delta v,\delta v)\right|\leq\frac{\beta_0^3D^2}{10}\|dv\|^2+\frac{10\beta_0+\beta_0^3D^2}{20}\|\delta v\|^2.
\end{equation}

With the help of \eqref{3.23}--\eqref{3.25}, \eqref{3.22} gives
\begin{equation}  \label{3.26}
\|dv\|^2\leq\frac{1}{10}(5\alpha_0+\alpha_0^3D^2+\beta_0^3D^2)\|dv\|^2+\Big(a_0+\frac{1}{20}(10\beta_0+\alpha_0^3D^2+\beta_0^3D^2)\Big)\|\delta v\|^2.
\end{equation}

The inequality $\|\delta v\|^2\leq 3\|dv\|^2$ is obvious since
$\delta v$ is the trace of the $3\times3$-matrix $((dv)_{ij})$. With
the help of the latter inequality, \eqref{3.26} gives
$$
\Big[1-3a_0-\frac{1}{2}\alpha_0-\frac{3}{2}\beta_0-\frac{1}{4}(\alpha_0^3+\beta_0^3)D^2\Big]\|dv\|^2\leq 0.
$$
The coefficient in the brackets is positive under hypothesis \eqref{mi} and the inequality implies $dv\equiv0$. With the help of the boundary condition $v|_{\partial M}=0$, this implies
$v\equiv0$. \hfill $\Box$

\section[The inverse problem for shear waves]{The inverse problem for shear waves}

Let $v_s=\sqrt{\mu/\rho}$ be the velocity of shear waves. By $g$ we
denote the Euclidean metric and by $h=v^{-2}_sg$, the Riemannian
metric corresponding to shear waves. Assume $(M,h)$ to be a CNTM. We
assume that, for every geodesic $\gamma:[0,l]\rightarrow M,\
|\dot\gamma|^2=h_{jk}\dot\gamma{}^j\dot\gamma{}^k=1$ with endpoints
in $\partial M$, we can activate the shear wave with an arbitrary
initial polarization $\eta(0)$ and can measure the final
polarization $\eta(l)$. Mathematically, this means that the
fundamental matrix $U(\gamma)$ of system (\ref{Rytov}) is known
for every geodesic $\gamma$ with endpoints in $\partial M$ such that
$\eta(l)=U(\gamma)\eta(0)$. We are going to recover the residual
stress $R$ from the data $U(\gamma)$.

Introduce the tensor field $ f $ by
\begin{equation}  \label{3.1}
f_{jklm}=-i\frac 1 {4\rho v_s^6}(c_{jlkm}+c_{jmkl}).
\end{equation}
It possesses the symmetries
$$
f_{jklm}=f_{kjlm}=f_{jkml}=f_{lmjk}
$$
as follows from (\ref{1.6}). Express $f$ through $R$ by substituting (\ref{1.4}) into (\ref{3.1})
\begin{equation}  \label{3.2}
\begin{aligned}
f_{jklm}=-i\frac 1 {4\rho v_s^4}\Big(&
\nu_1(\mbox{tr}\,R)(h_{jl}h_{km}+h_{jm}h_{kl})
+\frac{\nu_2}{2}(\mbox{tr}\,R)(2h_{jk}h_{lm}+h_{jl}h_{km}+h_{jm}h_{kl})\\
+&\nu_3(R_{jl}h_{km}+R_{jm}h_{kl}+R_{kl}h_{jm})\\
+&\frac{\nu_4}{2}(2R_{jk}h_{lm}+R_{jl}h_{km}+R_{jm}h_{kl}+R_{kl}h_{jm}+R_{km}h_{jl}+2R_{lm}h_{jk})\Big).
\end{aligned}
\end{equation}
Here $\mbox{tr}\,R=h^{jk}R_{jk}$ is the Riemannian trace.

Rytov's law (\ref{Rytov}) is written in terms of $f$ as
\begin{equation}  \label{3.4}
\left(\frac{D\eta}{d\tau}\right)_j = (\delta^p_j-{\dot\gamma}_j\dot\gamma{}^p)f_{qrjk}\dot\gamma{}^q\dot\gamma{}^r\eta^k.
\end{equation}
To write Rytov's law in a coordinate free form, we introduce the following notations. Define the linear operator $f_{\dot\gamma}$ by
$(f_{\dot\gamma}\eta)_j=f_{qrjk}\dot\gamma{}^q\dot\gamma{}^r\eta^k$. For a nonzero vector $\xi$, let $P_\xi$ be the orthogonal projection onto
$\xi^\bot=\{\eta\mid\langle\xi,\eta\rangle=\xi^j\eta_j=0\}$. In coordinates $(P_\xi\eta)_j=(\delta^p_j-\frac{1}{|\xi|^2}\xi_j\xi^p)\eta_p$. Then the coordinate free form of (\ref{3.4}) is
$$
\frac{D\eta}{d\tau}=P_{\dot\gamma}(f_{\dot\gamma}\eta).
$$
This equation can be also written in the form
\begin{equation}  \label{3.5}
\frac{D\eta}{d\tau}=(P_{\dot\gamma}f_{\dot\gamma})\eta,\quad \eta(0)=\eta_0,
\end{equation}
where, for a symmetric tensor $u=(u_{jk})$,
$$
(P_\xi u)_{jk}=(\delta^p_j-\frac{1}{|\xi|^2}\xi_j\xi^p)(\delta^q_k-\frac{1}{|\xi|^2}\xi_k\xi^q)u_{pq},
$$
compare with \cite[formula (5.2.1)]{mb}. $P_\xi$ is the orthogonal
projection onto the subspace $\{u\mid u_{jk}\xi^k=0\}$. Let
$\dot\gamma{}^\bot(\tau)$ be the (complex) two-dimensional subspace
of (the complexification of) the tangent space $T_{\gamma(\tau)}M$ consisting of vectors orthogonal to $\dot\gamma(\tau)$,
and let
$I^{0,\tau}_\gamma:\dot\gamma{}^\bot(0)\rightarrow\dot\gamma{}^\bot(\tau)$
be the parallel transport along $\gamma$ with respect to the metric
$h$. The solution to the initial value problem (\ref{3.5}) can be
written as
\begin{equation}  \label{3.6}
\eta(l)=I^{0,l}_\gamma(U(\gamma)\eta_0)\quad\mbox{for}\quad\eta_0\in\dot\gamma{}^\bot(0)
\end{equation}
with some linear operator (the fundamental matrix) $U(\gamma):\dot\gamma{}^\bot(0)\rightarrow\dot\gamma{}^\bot(0)$. Observe that $U(\gamma)$ is a unitary operator since the matrix of system (\ref{3.5}) is skew-Hermitian as is seen from (\ref{3.2}). We consider the problem of recovering the tensor field $R$ from the data $U(\gamma)$ known for all geodesics with endpoints in $\partial M$.

The problem is strongly nonlinear since the data $U(\gamma)$ depends on $R$ in a nonlinear manner. Let us linearize the problem. To this end we represent the fundamental matrix by the Neumann series
$$
U(\gamma)=E+\int\limits_0^l I_\gamma^{\tau,0}((P_{\dot\gamma}f_{\dot\gamma})(\tau))\,d\tau+
\int\limits_0^l I_\gamma^{\tau,0}((P_{\dot\gamma}f_{\dot\gamma})(\tau))\,d\tau\int\limits_0^\tau I_\gamma^{t,\tau}((P_{\dot\gamma}f_{\dot\gamma})(t))\,dt+\dots,
$$
where $E$ is the identity matrix, and delete the terms that are nonlinear in $f$. In other words, we use Born's approximation
$$
U(\gamma)-E=\int\limits_0^l I_\gamma^{\tau,0}((P_{\dot\gamma}f_{\dot\gamma})(\tau))\,d\tau
$$
as the data for the linearized inverse problem. Since the integrand is a symmetric operator, the data are equivalent to the quadratic form
\begin{equation}  \label{3.7}
\langle(U(\gamma)-E)\eta(0),\eta(0)\rangle=\int\limits_0^l \langle ((P_{\dot\gamma}f_{\dot\gamma}(\tau))\eta(\tau),\eta(\tau)\rangle\, d\tau
\end{equation}
on the two-dimensional vector space of vector fields $\eta(\tau)$ that are orthogonal to $\dot\gamma$ and parallel along $\gamma$ in the sense of the metric $h$, i.e., satisfy
$\frac{D\eta}{d\tau}=0$. We denote this space by $\gamma^\bot$. Since
$$
\langle (P_{\dot\gamma}f_{\dot\gamma})\eta,\eta\rangle=\langle
P_{\dot\gamma}(f_{\dot\gamma}\eta),\eta\rangle=\langle
f_{\dot\gamma}\eta,P_{\dot\gamma}\eta\rangle= \langle
f_{\dot\gamma}\eta,\eta\rangle=
f_{jklm}\dot\gamma{}^j\dot\gamma{}^k\eta^l\eta^m,
$$
(\ref{3.7}) can be written as
\begin{equation}  \label{3.8}
\langle(U(\gamma)-E)\eta(0),\eta(0)\rangle=(Lf)(\gamma,\eta):=\int\limits_0^l f_{jklm}(\gamma(\tau))\dot\gamma{}^j(\tau)\dot\gamma{}^k(\tau)\eta^l(\tau)\eta^m(\tau)\, d\tau.
\end{equation}
The operator $L$ defined by this formula is called the mixed ray
transform (compare with \cite[formula (7.1.55)]{mb}).

We express the integrand of (\ref{3.8}) through $R$ by substituting value (\ref{3.2}) for $f$. On using the relations $|\dot\gamma|=1$ and $\langle\dot\gamma,\eta\rangle=0$, we obtain
\begin{equation}  \label{3.9}
(Lf)(\gamma,\eta)=-i\int\limits_\gamma\frac{1}{4\rho v_s^4}\Big(
\nu_4R_{jk}\eta^j\eta^k+\nu_4R_{jk}\dot\gamma{}^j\dot\gamma{}^k|\eta|^2+\nu_2(\mbox{tr}\,R)|\eta|^2\Big)\, d\tau.
\end{equation}
The first term of the integrand on (\ref{3.9}) is considered as the
leading term. Therefore we assume in this section that the function
$\nu_4$ does not vanish in $M$. Note that we do not use the
equilibrium condition (\ref{1.7}) in this section. Introducing the
notations
\begin{equation}  \label{3.10}
F=-\frac{i\nu_4}{4\rho v_s^4}R,\quad a=\frac{\nu_2}{\nu_4},
\end{equation}
we write (\ref{3.9}) as
\begin{equation}  \label{3.11}
(Lf)(\gamma,\eta)=\int\limits_\gamma F_{jk}\eta^j\eta^k\,
d\tau+|\eta|^2\int\limits_\gamma
F_{jk}\dot\gamma{}^j\dot\gamma{}^k\, d\tau+
|\eta|^2\int\limits_\gamma a(\mbox{tr}\,F)\, d\tau.
\end{equation}
The first term on the right-hand side of (\ref{3.11}) is the
transverse ray transform $(JF)(\gamma,\eta)$, compare with
\cite[formula (5.1.72)]{mb}. The second term coincides, up to the
factor $|\eta|^2$, with the longitudinal ray transform
$(IF)(\gamma)$, while the last term coincides with
$I(a(\mbox{tr}\,F)h)(\gamma)$. Therefore (\ref{3.11}) can be written
as
\begin{equation}  \label{3.12}
(JF)(\gamma,\eta)+|\eta|^2(IF)(\gamma)+|\eta|^2I(a(\mbox{tr}\,F)h)(\gamma)=(Lf)(\gamma,\eta)\quad
(\eta\in\gamma^\bot).
\end{equation}
We consider (\ref{3.12}) as an equation in an unknown symmetric tensor field $F=(F_{jk})$ while the right-hand side $(Lf)(\gamma,\eta)$ is given.

Equation (\ref{3.12}) can be simplified. Indeed, let $(\eta_1,\eta_2)$ be an orthonormal basis of $\gamma^\bot$. Then
\begin{equation}  \label{3.13}
(JF)(\gamma,\eta_1)+(JF)(\gamma,\eta_2)+2(IF)(\gamma)+2I(a(\mbox{tr}\,F)h)(\gamma)=(Lf)(\gamma,\eta_1)+(Lf)(\gamma,\eta_2).
\end{equation}
Since
$$
(JF)(\gamma,\eta_1)+(JF)(\gamma,\eta_2)=\int\limits_\gamma(\mbox{tr}\,P_{\dot\gamma}F)\, d\tau,
$$
(\ref{3.13}) can be written as
$$
\frac{|\eta|^2}{2}\int\limits_\gamma(\mbox{tr}\,P_{\dot\gamma}F)\, d\tau+|\eta|^2(IF)(\gamma)+|\eta|^2I(a(\mbox{tr}\,F)h)(\gamma)=
\frac{|\eta|^2}{2}\Big((Lf)(\gamma,\eta_1)+(Lf)(\gamma,\eta_2)\Big).
$$
Subtracting this equality from (\ref{3.12}), we obtain
\begin{equation}  \label{3.14}
(JF)(\gamma,\eta)-\frac{|\eta|^2}{2}\int\limits_\gamma(\mbox{tr}\,P_{\dot\gamma}F)\, d\tau=(Lf)(\gamma,\eta)-
\frac{|\eta|^2}{2}\Big((Lf)(\gamma,\eta_1)+(Lf)(\gamma,\eta_2)\Big).
\end{equation}
Let $Q_\xi$ be the orthogonal projection of symmetric tensors onto
the subspace $\{u=(u_{jk})\mid u_{jk}\xi^k=0,\ \mbox{tr}\,u=0\}$
(compare with Section 6.2 of \cite{mb}). The left-hand side of
(\ref{3.14}) can be transformed as follows:
$$
\begin{aligned}
(JF)(\gamma,\eta)-\frac{|\eta|^2}{2}\int\limits_\gamma(\mbox{tr}\,P_{\dot\gamma}F)\, d\tau&=
\int\limits_\gamma\Big(\langle(P_{\dot\gamma}F)\eta,\eta\rangle-\frac{|\eta|^2}{2}(\mbox{tr}\,P_{\dot\gamma}F)\Big)\, d\tau\\&=
\int\limits_\gamma\langle(Q_{\dot\gamma}F)\eta,\eta\rangle\, d\tau=(KF)(\gamma,\eta),
\end{aligned}
$$
where $K$ is the truncated transverse ray transform, see the definition in Section 6.2 of \cite{mb}. Since $(Lf)(\gamma,\eta)$ is known for every $\eta\in\gamma^\bot$, the right-hand side of
(\ref{3.14}) is known too. Moreover, it is independent of the choice of an orthonormal basis $(\eta_1,\eta_2)$ as follows from (\ref{3.14}). Denoting the right-hand side of (\ref{3.14}) by $D(\gamma,\eta)$, we arrive to the equation
\begin{equation}  \label{3.15}
(KF)(\gamma,\eta)=D(\gamma,\eta)\quad (\eta\in\gamma^\bot).
\end{equation}

Let us distinguish the trace free part of the tensor $F$, i.e., represent it in the form
$$
F=\tilde F +\frac{1}{3}(\mbox{tr}\,F)h,\quad\mbox{where}\quad\mbox{tr}\,\tilde F=0.
$$
By Theorem 6.6.2 of \cite{mb}, the trace free tensor field $\tilde F$ can be uniquely recovered from $Kf$ if the CNTM $(M,h)$ satisfies some curvature condition. Moreover, the stability estimate
$$
\|\tilde F\|_{L^2}\leq C \|KF\|_{H^1}
$$
holds with some constant $C$ independent of $F$.

On assuming $\tilde F$ has been recovered, we can calculate $I\tilde F$ and $J\tilde F$. Then
$$
\begin{aligned}
(JF)(\gamma,\eta)&+|\eta|^2(IF)(\gamma)\\
&=\frac{1}{3}(J((\mbox{tr}\,F)h))(\gamma,\eta)+\frac{|\eta|^2}{3}(I((\mbox{tr}\,F)h))(\gamma)
+(J\tilde F)(\gamma,\eta)+|\eta|^2(I\tilde F)(\gamma)\\
&=\frac{|\eta|^2}{3}
(I((\mbox{tr}\,F)h))(\gamma)+\frac{|\eta|^2}{3}(I((\mbox{tr}\,F)h))(\gamma)+(J\tilde
F)(\gamma,\eta)+|\eta|^2(I\tilde F)(\gamma).
\end{aligned}
$$
Substituting this value into (\ref{3.12}), we obtain the equation
\begin{equation}  \label{3.16}
(I((a+\frac{2}{3})(\mbox{tr}\,F))(\gamma)=\frac{1}{|\eta|^2}\Big((Lf)(\gamma,\eta)-(J\tilde F)(\gamma,\eta)\Big)-(I\tilde F)(\gamma).
\end{equation}
The left-hand side of this equation is the ray transform of the scalar function $(a+\frac{2}{3})(\mbox{tr}\,F)$ while the right-hand side is known. By the way, the right-hand side must be independent of $\eta\in\gamma^\bot$ as follows from the equation.

By Mukhometov's theorem \cite{Mu}, the function $(a+\frac{2}{3})(\mbox{tr}\,F)$ can be uniquely recovered from the ray transform $I((a+\frac{2}{3})(\mbox{tr}\,F))$ if $(M,h)$ is a simple manifold, see Section 1.1 of \cite{mb} for the definition of a simple manifold. Thus, the trace $\mbox{tr}\,F$ can be recovered under the additional assumption that the function $3a+2$ does not vanish. Since the trace free part $\tilde F$ has been already recovered, this gives the uniqueness statement for a solution to equation (\ref{3.12}). The corresponding stability estimate can be also obtained.

\bigskip

Finally, let us discuss the case of constant material parameters
$\lambda,\mu,\rho,\nu_1,\dots,\nu_4$. In this case, to
solve equation (\ref{3.12}), we do not need to measure ray integrals
$(Lf)(\gamma,\eta)$ for all lines $\gamma$ of ${\mathbb R}^3$.
Indeed, as is proved in \cite{LS}, three families of lines are
sufficient to recover the trace free part $\tilde F$ from the data
$KF$, each family consists of all lines parallel to a coordinate
plane. To solve equation (\ref{3.16}), it suffices to know the
right-hand side for all lines $\gamma$ parallel to a plane.


\begin{thebibliography}{CDGM06}

\bibitem[CJ]{CJ}
V. Cerveny and J. Jech.
Linearized solutions of kinematic problems of seismic body waves in inhomogeneous slightly anisotropic media.
{\it J. Geophys.} {\bf 51} (1982), 96--104.

\bibitem[HU]{hu}

S. Hansen and G. Uhlmann. Propagation of polarization in
elastodynamics with residual stress and travel times. {\it Math.
Ann.} {\bf 326} (2003), 563--587.

\bibitem[Ho]{ho}
A. Hoger. On the determination of residual stress in an elastic
body. {\it J. Elasticity} {\bf 16} (1986), 303--324.

\bibitem[IWY1]{iwy1}
V. Isakov, J.N. Wang, and M. Yamamoto. Uniqueness and stability of
determining the residual stress by one measurement. {\it Comm.
Partial Differential Equations} {\bf 32} (2007), 833--848.

\bibitem[IWY2]{iwy2}
V. Isakov, J.N. Wang, and M. Yamamoto. An inverse problem for a
dynamical Lame's system with residual stress. {\it SIAM J. Math.
Anal.} {\bf 39} (2007/08), 1328--1343.


\bibitem[IMN]{imn}
S.A. Ivanov, C.S. Man, and G. Nakamura. Recovery of residual stress
in a vertically heterogeneous elastic medium. {\it IMA J. Appl.
Math.} {\bf 70} (2005), 129--146.

\bibitem[Kr]{Kr}
Kravtsov Yu.A.
``Quasi-isotropic" approximation
of geometric optics. \/{\it Dokl. Acad. Nauk. SSSR, \/}{\bf 183
} (1968), no.~1, 74--76 (in Russian).

\bibitem[KO]{KO}
Kravtsov Y.A. and Orlov Y.I.
{\it Geometrical
Optics in Inhomogeneous Media. \/}Springer--Verlag, Berlin, 1990.

\bibitem[LS]{LS}
W. Lionheart and V. Sharafutdinov.
Reconstruction algorithm for the linearized polarization tomography problem with incomplete data.
in {\it Imaging Microstructures: Mathematical and Computational Challenges}, Ed. Habib Ammari and Hyeonbae Kang,
{\it Contemporary Mathematics} {\bf 494} (2009), 137--160.

\bibitem[Ma]{Man}
C.-S. Man.
Hartig's law and linear elasticity with initial stress.
{\it Inverse Problems} {\bf 14} (1998), 313--319.

\bibitem[Mu]{Mu}
R. Mukhometov.
On a problem of reconstructing Riemannian metrics.
{\it Siberian Math. J} {\bf 22} (1982), 420--433.

\bibitem[Ra]{ra}
L. Rachele. Uniqueness in inverse problems for elastic media with
residual stress. {\it Comm. Partial Differential Equations} {\bf 28}
(2003), 1787--1806.

\bibitem[Ro1]{ro1}
R.L. Robertson. Boundary identifiability of residual stress via the
Dirichlet to Neumann map. {\it Inverse Problems} {\bf 13} (1997),
1107--1119.

\bibitem[Ro2]{ro2}
R.L. Robertson. Determining residual stress from boundary
measurements: a linearized approach. {\it J. Elasticity} {\bf 52}
(1998), 63--73.

\bibitem[Sh]{mb}
V. Sharafutdinov. {\it Integral Geometry of Tensor Fields}. VSP,
Utrecht, the Netherlands, 1994.

\bibitem[Sh1]{Sh1}
V. Sharafutdinov. Ray transform and some rigidity problems for
Riemannian metrics. The IMA Volumes in Mathematics and its
Applications, {137} (2003) {\it Geometric Methods in Inverse
Problems and PDE Control}, eds. C. Croke, I. Lasieska, G. Uhlmann,
M. Vogelius, Springer, 215--238.

\bibitem[Sh2]{Sh2}
V. Sharafutdinov. Slice-by-slice reconstruction algorithm for vector
tomography with incomplete data. {\it Inverse Problems} {\bf 23}
(2007), 2603--2627.


\end{thebibliography}
\end{document}